\newcommand{\citep}[1]{\citeauthor{#1}~[\citeyear{#1}]}
\newcommand{\bitemize}{\begin{list}{$\bullet$}{\topsep=1pt \parsep=0pt \itemsep=1pt \leftmargin=1em }} 
\newcommand{\eitemize}{\end{list}}
\newcommand{\benumerate}{\hspace{-0.7in}\begin{enumerate}\topsep=0pt \parsep=0pt \itemsep=-3pt}
\newcommand{\eenumerate}{\end{enumerate}}
\newcommand{\beitemize}{\begin{list}{$\bullet$}{\topsep=1.5pt \parsep=0pt \itemsep=1pt \leftmargin=1em }} 
\newcommand{\enitemize}{\end{list}}
\def\thmspace{0.2em}
\newtheorem{theorem}{\hspace{\thmspace}{\bf Theorem}\!}
\newenvironment{proof}{{\textit{Proof}.}}{\hfill$\Box$}
\def \iff{\;\Leftrightarrow\;}
\newcommand{\D}{{\mathscr{D}}}  % the set of all databases
\newcommand{\R}{{\mathscr{R}}}  % the range of all databases
\newcommand{\cF}{\mathcal{F}}
\newcommand{\cM}{\mathcal{M}}
\newcommand{\cO}{\mathcal{O}}
\newcommand{\cP}{\mathcal{P}} %
\newcommand{\bx}{\bm{x}}
\newcommand{\by}{\bm{y}}
\newcommand{\EE}{\mathbb{E}}
\newcommand{\RR}{\mathbb{R}}
\newcommand{\Lap}{{\text{Lap}}}
\title{Differential Privacy for Stackelberg Games}
\author{
Ferdinando Fioretto$^{1}$\footnote{Authors names listed alphabetically. All authors have equal contributions.}\and
Lesia Mitridati$^{2}$\And
Pascal Van Hentenryck$^{2}$\\
\affiliations
$^1$Syracuse University, 
$^2$Georgia Institute of Technology\\
\emails
ffiorett@syr.edu,
lmitridati3@gatech.edu,
pvh@isye.gatech.edu
}
\begin{document}

\maketitle\sloppy\allowdisplaybreaks

\begin{abstract}  % put your abstract here!
This paper introduces a \emph{differentially private} (DP) mechanism to protect the information exchanged during the coordination of \textit{sequential} and \textit{interdependent} markets. This coordination represents a classic Stackelberg game and relies on the exchange of \emph{sensitive information} between the system agents. %, including the supply and demand bids in each market or the characteristics of the systems. 
% The paper is motivated by the observation that traditional differential privacy mechanisms are unsuitable for the problem of interest: The perturbation introduced by these mechanisms fundamentally changes the underlying optimization problem and even leads to unsatisfiable instances. 
The paper is motivated by the observation that the perturbation introduced by traditional DP mechanisms fundamentally changes the underlying optimization problem and even leads to unsatisfiable instances. 
To remedy such limitation, the paper introduces the \emph{Privacy-Preserving Stackelberg Mechanism} (PPSM), a framework that enforces the notions of \textit{feasibility} and \textit{fidelity} of the privacy-preserving information to the original problem objective. PPSM complies with the notion of differential privacy and ensures that the outcomes of the privacy-preserving coordination mechanism are close-to-optimality for each agent. %The fidelity property is analyzed by providing theoretical guarantees on the \textit{cost of privacy} of PPSM and 
Experimental results on several gas and electricity market benchmarks based on a real case study demonstrate the effectiveness of the approach.
\end{abstract}

%%%%%%%%%%%%%%%%%%%%%%%%%%%%%%%%%%%%%%%%%%%%%%%%%%%%%%%%%%%%%%%%%%%%%
\section{Introduction}

% The liberalization of the energy sectors along with the growing efforts to  achieve a sustainable energy future have lead to an increased competition  and the decentralization of decision-making in energy systems, such as electricity, heat, and natural gas systems \cite{pinson2017towards}. 
% Due to the liberalization of the energy system, the coordination among multiple rational agents, such as electricity and natural gas market operators, has become central to achieve efficient and cost-effective operations of the overall energy system \cite{ordoudis2018market}. 
% In particular, the growing share of flexible agents at the interface between energy systems, such as combined heat and power plants and \emph{gas-fired power plants} (GFPPs), creates strong interdependencies between their respective markets and increases the need for coordination mechanisms \cite{ordoudis2018market,mitridati2019market}.
%

The coordination of \textit{sequential} and \textit{interdependent}
agents in market-based systems has traditionally been modeled as a
\textit{Stackelberg game} \cite{simaan1973stackelberg}.  While the
coordination among these agents is central to achieve efficient and
cost-effective operations, it also requires the exchange of
proprietary information between the agents in order to achieve an
optimal strategy.  For instance, in the context of electricity and gas
markets, relevant data may represent the costs of producers, the loads
of consumers, or technical characteristics of the energy network.  As
has been observed in various works (e.g.,
\cite{zugno2013pool,baringo2013strategic}), such information may be
sensitive: It can provide a competitive advantage over other strategic
agents in the system, it may induce financial losses, and it may even
benefit external attackers \cite{maharjan2013dependable}.

To address this issue, several privacy-preserving frameworks have been
proposed, with \emph{Differential Privacy} (DP) \cite{dwork:06}
emerging as a robust privacy framework for many applications.  DP
allows to measure and bound the risk associated with an individual
participation in an analysis task. DP algorithms rely on the injection
of carefully calibrated noise to the output of a computation. They can
thus be used to \emph{obfuscate} the sensitive data exchanged by the
system agents in the market. However, as shown in Section
\ref{sec:experiments}, when perturbed data are used as input to
\emph{Stackelberg games}, they may produce results that are
fundamentally different from those obtained on the original data: They
often transform the nature of the underlying optimization problem and
even lead to \emph{severe feasibility issues}.

This paper is a first step in addressing this challenge. It introduces
the \emph{Privacy-Preserving Stackelberg Mechanism} (PPSM) for the
coordination of sequential and interdependent agents. PPSM is a
two-stage protocol that allows the coordinating agents to exchange
differentially private data of high fidelity. In particular, PPSM
relies on several learning and optimization models (including
\emph{fidelity constraints} on the objectives and coordination
variables of the agents) to ensure that the privacy-preserving
information exchanged between the agents preserves the feasibility and
near-optimality of the original Stackelberg game. PPSM has been
analyzed both theoretically and experimentally. The theoretical
guarantees ensure differential privacy and near optimality, while the
experimental results validate the approach on a real test case for the
coordination of electricity and natural gas markets in the
Northeastern United States \cite{byeon2019unit}. The case study shows that PPSM can bring up to two orders of magnitude error reduction over standard privacy-preserving mechanisms.

Although the paper was motivated by the coordination between natural
gas and electricity markets, the proposed methods may apply to any type of coordination mechanism between sequential and interdependent agents where the agents exchange information and synchronize through price signals. 
%An extended version of this work containing all the proofs and missing details is available and will be made accessible upon request.

\iffalse
Potential applications in the energy system include, but are not restricted to, the market-based coordination approach for electricity and district heating systems proposed in \cite{mitridati2016optimal,mitridati2019bid}, 
%the optimal participation strategy of price-maker wind producers introduced in \cite{zugno2013pool,baringo2015offering}, 
the game-theoretical coordination of transmission and distribution electricity systems developed in \cite{yuan2017hierarchical,le2019game,hassan2018energy}, or the real-time pricing scheme for coordinating the demand response from aggregators and consumers in the distribution system studied in \cite{zhang2016real,momber2015retail}.
\fi

%%%%%%%%%%%%%%%%%%%%%%%%%%%%%%%%%%%%%%%%%%%%%%%%%%%%%%%%%%%%%%%%%%%%%
\section{Problem Definition and Privacy Goal} 
\label{sec:preliminaries}

\begin{figure}[!t]
    \centering
    \includegraphics[width=.9\linewidth]{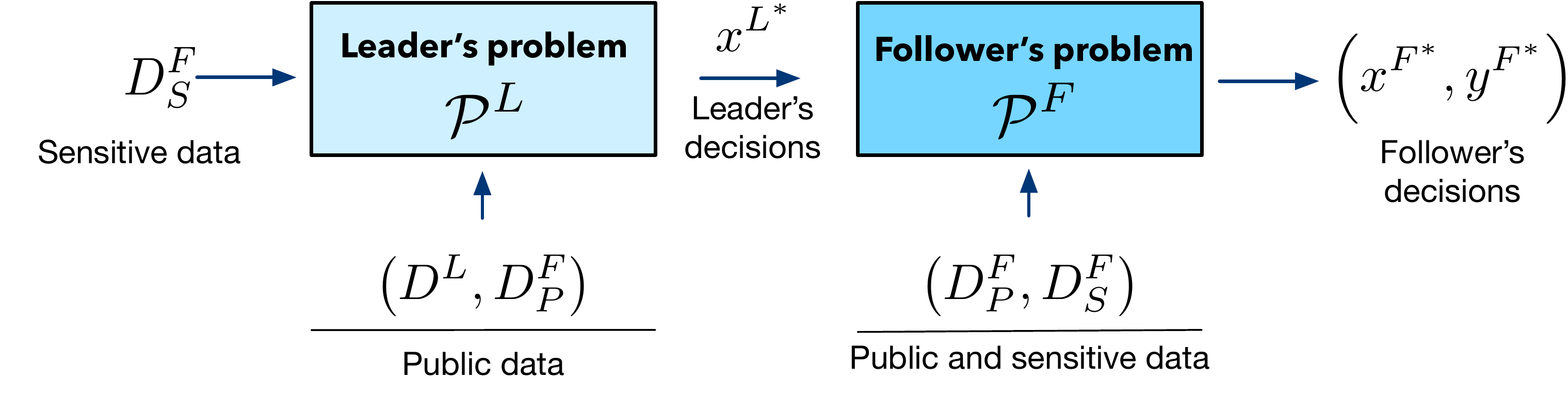}
    \caption{Stackelberg game in sequential interdependent markets.}
    \label{fig:S}
\end{figure}
%%%%%%%%%%%%%%%%%%%%%%%%%%%%%%%%%%%%%%%%%%%%%%%%%%%%%%%%%%%%%%%%%%%%%

The strategies of two sequential and interdependent agents, such as
energy market operators, represent a classic
\emph{Stackelberg game} \cite{simaan1973stackelberg}. In this
framework, which is schematically illustrated in Figure \ref{fig:S},
the \emph{leader} (e.g., the first market operator) optimizes its
decisions while anticipating the reaction of the \emph{follower}
(e.g., the second market operator). The leader actions impact the
reaction of the follower, which in turn impacts the leader objective
value. As a result, the leader strategy in Stackelberg games can be
modeled as a bilevel optimization problem $\cP^L\left(D^L,D^F_P,D^F_S\right)$:
\begin{subequations} \label{eq:hierarchical}
{\small
\begin{alignat}{2}
  {\cO}^{{L}^*}=\min_{\bx^L\!\!,\by^F} &
  	\cO^L 
  		\left(\bx^L\!\!\!, D^L\right) 
  		\label{eq:hierarchical0} \\
  \text{s.t.}\;\; & 
  (\bx^L\!\!\!, \by^F)  \in \cF^L(D^L) 
  \label{eq:hierarchical1} \\
  & %\underbrace{\by^F}_\text{dual solutions} 
  \by^F\!\!\!= \text{dual sol. } \min_{\bx^F} \; \cO^F \left(\bx^F\!\!\!, D^F_P, D^F_S\right) 
   \label{eq:hierarchical2} \\ 
   	& \hspace{56pt} \text{s.t. } \; 
    (\bx^F\!\!\!, \bx^L)  \in \cF^F (D^F_P, D^F_S), 
	%\tag{S4}
	\label{eq:hierarchical3}
\end{alignat}
}
\end{subequations}
\noindent where $\bx^L$ represents the vector of decision variables of the
leader, and $\bx^F$ and $\by^F$ the vectors of \emph{primal} and
\emph{dual} decision variables of the follower. Additionally, $D^L$ and ($D^F_P, D^F_S)$ are the inputs of the leader and follower problems, respectively. The follower inputs are either \textit{public}
($D^F_P$) or \textit{sensitive} ($D^F_S$). The upper-level problem
minimizes the leader objective cost \eqref{eq:hierarchical0},
constrained by its feasible decision space $\cF^L$
\eqref{eq:hierarchical1}, and the reaction of the follower in the
lower-level problem \eqref{eq:hierarchical2} and
\eqref{eq:hierarchical3}.  The follower problem, denoted by
$\cP^F\left(\bx^L,D^F_P,D^F_S\right)$, minimizes the follower
objective cost \eqref{eq:hierarchical2}, constrained by the feasible space $\cF^F$ of the follower decisions \eqref{eq:hierarchical3}.

\smallskip\noindent{\textbf{Coordination Variables}:} The leader
\emph{primal} variables $\bx^L$ appear as fixed parameters in the
expression of the follower feasible space $\cF^F\!$. In return, the
lower-level problem provides feedback from the follower \emph{dual}
variables $\by^F$ to the upper-level problem through its feasible
space $\cF^L$. In energy markets, primal variables typically represent
commitment and energy production decisions, while dual variables
represent energy prices.  These variables shared between the follower and the leader are called \textit{coordination variables}.

\smallskip\noindent{\textbf{Assumptions}:} Due to the sequential
decision-making nature, the leader needs to anticipate the reaction of
the follower.  Therefore, this paper assumes that the leader has
access to a prediction model $\cM^L(D^L,D^F_P,D^F_S)$ that predicts
the values $\bar{y}^F$ of the follower dual variables. This is a
natural assumption in energy markets applications that motivates this
paper: Such forecasting models are used in practice since generators
must predict energy prices in order to efficiently bid in the markets and the market needs to ensure reliability of the overall
system. Similarly, the follower has access to a forecasting model
$\cM^F(\bx^L, D^F_P, D^F_S)$ that predicts its objective value
$\bar{\cO}^{F^*}\!\!$ and the values of its dual variables
$\bar{y}^{F^*}\!\!$.  This is also a natural assumption in energy markets
since the energy prices and costs, representing the values of the dual
variables and objective cost of the follower problem
$\cP^F\left(\bx^L,D^F_P,D^F_S\right)$, are released publicly, and thus
can be used to train precise estimators. The PPSM mechanism in this
paper applies these forecasting models on privacy-preserving
versions of the sensitive parameters.

\smallskip\noindent{\textbf{Motivation Problem}:}
\citep{byeon2019unit} recently showed that the coordination between
electricity and natural gas markets can be modeled as a Stackelberg
game between a leader, i.e. the \emph{gas-aware electricity unit
  commitment} (GAUC), and two followers, i.e. the \emph{electricity
  market} (EM) and \emph{natural gas market} (GM). This game can
alleviate the reliability issues that emerged in the recent polar
vortex events. In this context, the leader coordination variables
represent the \textit{commitment} of Gas-Fired Power Plants (GFPPs),
which impacts their participation in both EM and GM. The relevant
follower coordination variables represent natural gas \textit{prices} in the GM. These prices impact the GAUC decisions through
\textit{coordination constraints} representing the profitability of
the bids of GFPPs.

\smallskip\noindent{\textbf{Privacy Goal}:} The paper focuses on situations where the follower inputs $D^F_S$
contain sensitive information that should not be revealed.  In the
case of electricity and natural gas markets, a FERC directive allows the gas and electricity operators to share network data, while the bids of the generators are public information. Therefore, the sensitive parameters typically represent the gas demand profile of consumers. As discussed in the introduction, if released, they can provide a competitive advantage to strategic agents in the energy system and may result in financial losses for the follower, as shown
in \cite{zugno2013pool,baringo2013strategic}.  Thus, the \emph{privacy goal} is to ensure that the sensitive information $D^F_S$ is not
breached during the coordination process described in Problem
\eqref{eq:hierarchical}. The next section introduces a formal notion that will be used to achieve this goal.

\section{Background: Differential Privacy}
\label{sec:differential_privacy}

\emph{Differential privacy} \cite{dwork:06} (DP) is a rigorous privacy notion 
used to protect disclosures of an individual's data in a computation. The paper 
considers datasets  $D^F_S \!=\! (d_1,\ldots,d_n)$ 
%D^F_S \!=\! \{d_i\}_{i=\!1}^n$  
with each $d_i \!\in\! \RR_+$ describing a sensitive quantity, such as the participants' demand value in the GM. 
DP relies on the notion of dataset adjacency, which captures the differential information to be protected. To protect the values in the dataset, the paper uses the following \emph{adjacency relation}:
\begin{align*}
  D \sim_\alpha D' \iff \exists i: |d_i - d_i'| \leq \alpha \land\; 
  \forall j \neq i: d_j = d_j',
\end{align*}
where $D$ and $D'$ are two datasets and $\alpha$ is a positive real value. Such definition is useful to hide individual participation up to some quantity $\alpha$ \cite{chatzikokolakis2013broadening}. In the paper context, it allows customers to reveal gas demand profiles that hide the real consumption by a factor of $\alpha$.

A randomized mechanism $\cM \!:\! \D \!\to\! \R$ with
domain $\D$ and range $\R$ is $\epsilon$-DP if, for any output response $O \subseteq \R$ and any two \emph{adjacent}
datasets $D \sim_\alpha D'$ , for a fixed value $\alpha > 0$: 
\begin{equation}
  \label{eq:dp_def}
  Pr[\cM(D) \in O] \leq \exp(\epsilon)\, Pr[\cM(D') \in O].
\end{equation}
\noindent 
The parameter $\epsilon \geq 0$ is the \emph{privacy loss} of the
mechanism, with values close to $0$ denoting strong privacy. The level of \emph{indistinguishability} is controlled by the parameter $\alpha > 0$. The definition above was introduced by \citep{chatzikokolakis2013broadening}. It is a \emph{generalization} of the classical DP definition, that protects individuals participation, to arbitrary metric spaces. W.l.o.g.~the paper fixes $\epsilon=1.0$ and focuses in the indistinguishability parameter $\alpha$. In the context of this work, Definition \eqref{eq:dp_def} is referred to as $\alpha$-indistinguishability.

An important DP property is immunity to post-processing.
\begin{theorem}[Post-Processing \cite{dwork:13}] 
	\label{th:postprocessing} 
	Let $\cM$ be an $\alpha$-indistinguishable mechanism and $g$ be an arbitrary mapping from the set of possible outputs to an arbitrary set. Then, $g \circ \cM$ \mbox{is $\alpha$-indistinguishable.}
\end{theorem}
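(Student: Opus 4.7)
The plan is to mirror the standard post-processing proof for differential privacy, observing that inequality \eqref{eq:dp_def} has exactly the same structure as classical $\epsilon$-DP and only the adjacency relation has been replaced by $\sim_\alpha$. Since the argument below nowhere uses the specific form of adjacency, the reduction will be clean: whatever holds for $\cM$ on any pair $D \sim_\alpha D'$ will automatically transfer to $g \circ \cM$ on the same pair.

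First I would reduce to the case where $g$ is deterministic. Any randomized map $g$ can be realized as $g(r) = \tilde{g}(r, z)$ for some deterministic $\tilde{g}$ and an auxiliary random variable $z$ drawn from a distribution $\mu$ independent of $\cM$. Once the deterministic case is established, the randomized case follows by conditioning on $z$, applying the deterministic bound pointwise, and integrating against $\mu$; the multiplicative factor $\exp(\epsilon)$ does not depend on $z$, so integration preserves the inequality.

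For the deterministic step, fix an arbitrary (measurable) subset $O'$ of the target space of $g$, and any two datasets with $D \sim_\alpha D'$. Let $O = g^{-1}(O') = \{r \in \R : g(r) \in O'\}$ be its preimage under $g$. Then
\begin{align*}
Pr[(g \circ \cM)(D) \in O']
&= Pr[\cM(D) \in O] \\
&\leq \exp(\epsilon)\, Pr[\cM(D') \in O] \\
&= \exp(\epsilon)\, Pr[(g \circ \cM)(D') \in O'],
\end{align*}
where the middle inequality is the $\alpha$-indistinguishability guarantee of $\cM$ applied to the event $O$. This is precisely the $\alpha$-indistinguishability condition for $g \circ \cM$ with the same $\epsilon$.

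The only real obstacle is a measurability bookkeeping issue: one needs $O = g^{-1}(O')$ to be a legitimate event in the range $\R$ of $\cM$. In the setting implicitly used by the paper (the range of $\cM$ is a subset of $\RR^k$ and the output sets are Borel), this is immediate once $g$ is assumed measurable, which is the standard convention. No quantitative step depends on this technicality, so I do not anticipate any difficulty in formalizing it.
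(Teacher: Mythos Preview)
Your argument is correct and is exactly the standard post-processing proof for differential privacy, carried over verbatim to the $\alpha$-indistinguishability variant; as you note, the adjacency relation plays no role in the inequality-preservation step. The paper itself does not prove this theorem: it is stated as a cited result from \cite{dwork:13} without an accompanying proof, so there is nothing further to compare against.
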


A function $Q$ (also called \emph{query}) from a data set $D \in \D$ to a result set $R \subseteq \RR^n$ %.  A query $Q$ can
can be made differentially private by injecting random noise to its
output. The amount of noise depends on the \emph{sensitivity} of the
query, denoted by $\Delta_Q$ and defined as
\(
\Delta_Q = \max_{D \sim D'} \left\| Q(D) - Q(D')\right\|_1.
\)
In other words, the sensitivity of a query is the maximum
$l_1$-distance between the query outputs of any two adjacent
datasets $D$ and $D'$.

\section{The PPS Problem}
\label{sec:PPS_problem}

The Privacy-Preserving Stackelberg (PPS) problem establishes the
fundamental desiderata to be delivered by the obfuscation
mechanism. It operates on the follower sensitive parameters $D_F^S$
exchanged to ensure coordination between the leader and the follower
in the resolution of Problem \eqref{eq:hierarchical}. 
The goal is to produce a privacy-preserving version $\hat{D}_F^S$ 
that is $\alpha$-indistinguishable from $D_F^S$ and which ensures that
$\hat{\cO}^{{L}^*} \approx {\cO}^{{L}^*}$, where $\hat{\cO}^{{L}^*}$ 
is the leader objective value in the Stackelberg game 
when $D_F^S$ is replaced by $\hat{D}_F^S$.

%%%%%%%%%%%%%%%%%%%%%%%%%%%%%%%%%%%%%%%%%%%%%%%%%%%%%%%%%%%%%%%%%%%%%
\section{The PPSM Mechanism}
\label{sec:PPSM}

The \emph{Privacy-Preserving Stackelberg Mechanism} (PPSM) is
described schematically in \Cref{fig:PPSM} and consists of the
following steps which will be described in detail subsequently:

\begin{itemize}[noitemsep,nolistsep,leftmargin=6pt,itemindent=6pt]

\item {\bf [1] [Follower]:} Given the sensitive data $D^F_S$, the
  follower produces $\tilde{D}^F_S$ which is
  $\alpha$-indistinguishable from $D^F_S$.
    
\item {\bf [2a] [Leader]:} Given $\tilde{D}^F_S$ and $D^L$, 
the leader uses model $\cM^L$ to (privately) estimate the value 
of the dual coordination variables $\bar{y}^F$.

\item {\bf [2b] [Leader]:} It then solves the leader problem 
$\cP^L(D^L, \bar{y}^L)$ with the values of variables $\by^L$ 
fixed to $\bar{y}^L$ to obtain the value of the coordination 
variables $\bar{x}^L$.
   
\item {\bf [3a] [Follower]:} Given values $\bar{x}^L$ and the 
follower data $(D^F_P, \tilde{D}^F_S)$, the follower uses model 
$\cM^F$ to (privately) estimate the objective value 
$\bar{\cO}^{F* }$ and the values for the dual variables 
$\bar{y}^{F*}$ of the follower problem $\cP^F$.

\item {\bf [3b] [Follower]:} Finally, using $\tilde{D}^F_S$ and the estimates computed in [2b] and [3a], the follower produces a new privacy-preserving vector $\hat{D}^F_S$ to achieve $\hat{\cO}^{{L}^*} \approx {\cO}^{{L}^*}$.
\end{itemize}

\noindent Once PPSM produces the privacy-preserving demand
$\hat{D}^F_S$, the leader can clear its market using $\hat{D}^F_S$ to produce $\bar{x}^{L*}$ which is communicated to the follower for clearing its own market as depicted in Figure \ref{fig:S}.

\begin{figure}[!t]
    \centering
    \includegraphics[width=1.0\linewidth]{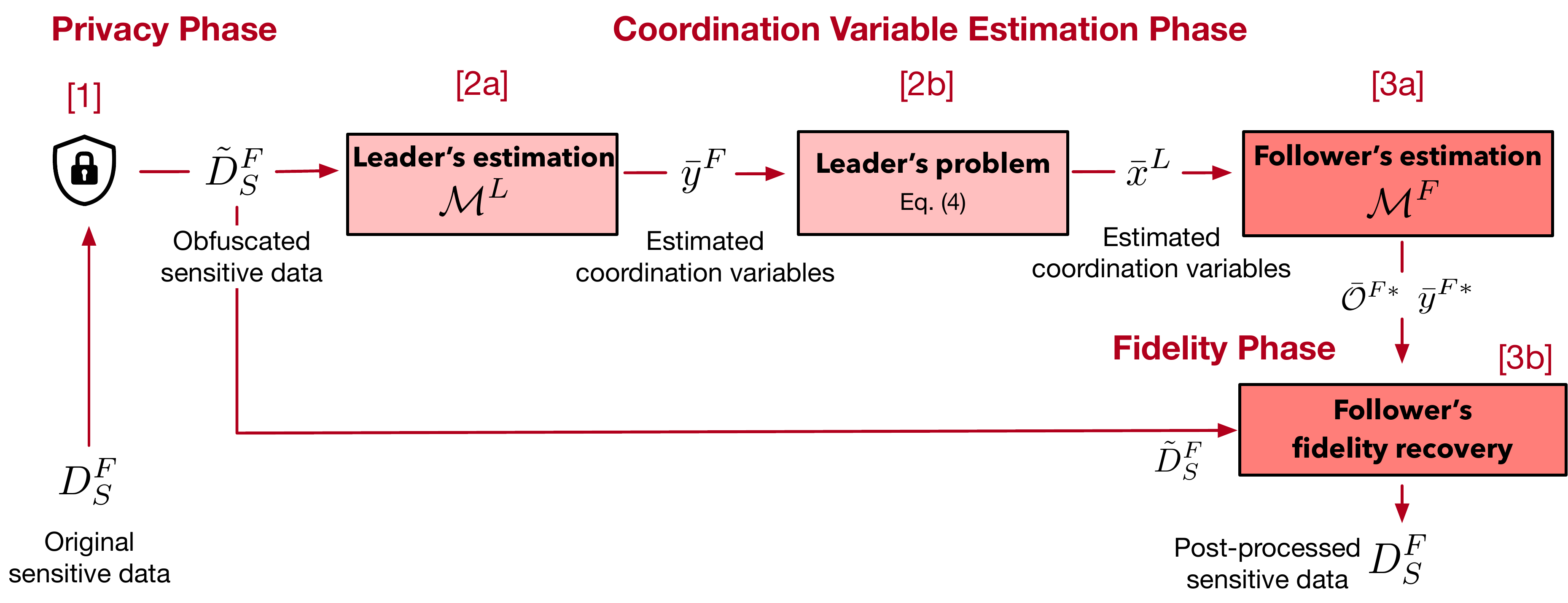}
    \caption{PPSM Illustration.}
    \label{fig:PPSM}
\end{figure}

\subsection{Privacy Phase}

In Step [1], the follower takes as input its sensitive parameters  $D^F_S$ and constructs a privacy-preserving version $\tilde{D}^F_S$ using the Laplace mechanism \cite{dwork:06}.
\begin{theorem}[Laplace Mechanism]
\label{th:m_lap} 
Let $Q$ be a numeric query that maps datasets to $\RR^n$. The Laplace mechanism that outputs $Q(D) + \xi$, where $\xi \in \RR^n$ is drawn from the Laplace distribution $\Lap(\Delta_Q / \epsilon)^n$, achieves $\alpha$-indistinguishability.
\end{theorem}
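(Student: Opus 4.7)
The plan is to follow the classical proof of the Laplace mechanism, adapted to the $\alpha$-adjacency relation used in the paper. Concretely, I would fix two adjacent datasets $D \sim_\alpha D'$ and an arbitrary measurable output region $O \subseteq \RR^n$, and show directly that the density ratio of the noisy query outputs is pointwise bounded by $\exp(\epsilon)$; integrating this bound over $O$ then yields the $\alpha$-indistinguishability inequality in Definition \eqref{eq:dp_def}.

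First, I would write the probability density of the mechanism's output at a point $z \in \RR^n$. Since $\xi$ is drawn coordinatewise from $\Lap(\Delta_Q/\epsilon)$, the density of $\cM(D) = Q(D) + \xi$ at $z$ is proportional to $\exp\!\bigl(-\epsilon \, \|z - Q(D)\|_1 / \Delta_Q\bigr)$, and similarly for $D'$. Taking the ratio of the two densities eliminates the normalizing constant and leaves an expression of the form $\exp\!\bigl(\tfrac{\epsilon}{\Delta_Q}\bigl(\|z - Q(D')\|_1 - \|z - Q(D)\|_1\bigr)\bigr)$.

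Next, I would apply the reverse triangle inequality in the $\ell_1$-norm to bound $\|z - Q(D')\|_1 - \|z - Q(D)\|_1 \le \|Q(D) - Q(D')\|_1$. Here is the one place where the paper's $\alpha$-adjacency enters: by the definition of the sensitivity $\Delta_Q = \max_{D \sim_\alpha D'} \|Q(D) - Q(D')\|_1$, the right-hand side is at most $\Delta_Q$. Substituting this bound gives a pointwise density ratio of at most $\exp(\epsilon)$. Integrating over $z \in O$ delivers $\Pr[\cM(D) \in O] \le \exp(\epsilon)\,\Pr[\cM(D') \in O]$, which is exactly $\alpha$-indistinguishability (the $\alpha$-dependence being absorbed into $\Delta_Q$).

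I do not expect a genuine obstacle: the argument is textbook, and the only conceptual subtlety is ensuring that the non-standard adjacency relation $\sim_\alpha$ flows through correctly. The key observation is that the adjacency enters only through the definition of $\Delta_Q$, so the standard proof transfers verbatim. A minor care-point is that the Laplace density factorizes across coordinates, so that the multivariate density ratio decomposes into a product whose exponent collapses via the triangle inequality to the $\ell_1$-norm of $Q(D) - Q(D')$; this is what justifies using $\Delta_Q$ rather than a coordinatewise bound.
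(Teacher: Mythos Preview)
Your proof is correct and is precisely the standard density-ratio argument for the Laplace mechanism, with the $\alpha$-adjacency entering only through the definition of $\Delta_Q$. Note, however, that the paper itself does not give a proof of this theorem at all: it is stated as a known result (attributed to \cite{dwork:06}) and used as a black box, so there is no paper proof to compare against.
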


\noindent
In the above, 
$\Lap(\lambda)^n$ denotes the i.i.d.~Laplace distribution with 0
mean and scale $\lambda$ over $n$ dimensions. 
As a result, the privacy-preserving parameters $\tilde{D}^F_S$ are obtained as follows:
\begin{equation} \label{eq:setp1}
	\tilde{D}^F_S  = D^F_S + \Lap(\alpha)^n.
\end{equation}
Importantly, the Laplace mechanism has been shown to be \emph{optimal}, i.e., it minimizes the mean-squared error for identity queries w.r.t.~the L1-norm \cite{koufogiannis:15}.

While \eqref{eq:setp1} ensures $\alpha$-indistinguishability, the
obfuscated data may not achieve strong \emph{fidelity} with respect to the
original problem. Crucially, in energy systems, the inputs generated
by this mechanism often fail to produce a feasible solution to the
problem of interest, as highlighted in Section
\ref{sec:experiments}. To remedy this limitation, the proposed PPSM introduces an \emph{optimization-based} approach that aims at producing a new privacy-preserving dataset $\hat{D}^F_S$ establishing \emph{fidelity} with respect to the constraints and objectives of the leader and follower problems.

\subsection{Estimating the Coordination Variables}

After having received the Laplace-obfuscated data $\tilde{D}^F_S$ 
from the follower, the leader uses model $\cM^L(D^L,D^F_P, \tilde{D}^F_S)$ 
to estimate the value of the coordination variables $\by^F$ of the follower 
(Step [2a]). 
Next, in Step [2b], the leader solves the optimization problem 
\begin{equation} \label{eq:step_2b}
\bar{x}^L = \arg\min_{\bx^L} \cO^L \left(\bx^L\!\!\!, D^L\right) 
\;\;\text{s.t.}\;\; (\bx^L\!\!\!, \bar{y}^{F})  \in \cF^L(D^L). 
\end{equation}
%which uses the estimates $\bar{y}^F$. 
Problem \eqref{eq:step_2b} describes, in fact, the leader \emph{subproblem}
(\Cref{eq:hierarchical0,eq:hierarchical1}), in which the values 
of variables $\by^F$ have been fixed to the estimates $\bar{y}^{F}$. %and the follower problem has been eliminated. 

The leader then communicates the estimates $\bar{x}^L$ to the
follower. In turn, the follower uses model $\cM^F(\bar{x}^L, D^F_P, 
\tilde{D}^F_S)$ to estimate the values of its subproblem objective 
value $\bar{\cO}^{F^*}$ and dual variables $\bar{y}^{F^*}$ (Step [3a]).

%As mentioned in Section
%\ref{sec:preliminaries}, in the domain of interest, such values
%represent, respectively, the energy costs and the gas prices
%associated with the demand profile, which are released publicly after
%the coordination process, and are commonly used to train a precise
%estimation model.

\subsection{Fidelity Phase} %: GM cost}
\label{sec:fidelity_phase}

Finally, given the obfuscated parameters $\tilde{D}^F_S$, computed in Step [1], the estimated values $\bar{x}^L$, computed in Step [2b], and the follower objective value $\bar{\cO}^{F^*}$ and dual variables $\bar{y}^{F^*}$, computed in Step [3a], the follower executes the following bilevel optimization problem:

\begin{subequations} \label{eq:pp3}
\begin{align}
  \min_{\hat{D}^F_S, \hat{\bx}^F, \hat{\by}^F} \;\;& 
  \| \hat{D}^F_S - \tilde{D}^F_S\|^2_2 \label{eq:pp3_0} \\
  \text{s.t.}\;\;& 
  |  \hat{\cO}^{{F}} - \bar{\cO}^{F^*} | \leq \eta_p \label{eq:pp3_1a} \\
  & |  \hat{\by}^F - \bar{y}^{F^*} | \leq \eta_d \label{eq:pp3_1b} \\
  % & |  \hat{\by}^g - \bar{y}^{g^*}(D^F_S) | \leq \eta \label{eq:pp3_1b} \\
& \hat{\by}^F = \text{dual sol. of } \cP^F(\bar{x}^L, \!D^F_P, \!\hat{D}^F_S), \label{eq:pp3_2}
\end{align}
\end{subequations}
\noindent
where $\eta_p$ and $\eta_d$ are parameters specifying the desired
fidelity for the value of the objective and dual variables of the 
follower.  Its objective is to find new values $\hat{D}^F_S$ that minimize the
distance to the Laplace-obfuscated $\tilde{D}^F_S$ \eqref{eq:pp3_0},
while ensuring (component-wise) fidelity with respect to the estimated
objective value $\bar{\cO}^{F^*}\!\!$ \eqref{eq:pp3_1a} and dual
variables $\bar{y}^{F^*}\!\!$ \eqref{eq:pp3_1b}. The follower
objective function ${\hat{\cO}}^{F}$ and dual variables
${\hat{y}}^{F}$ are defined as the solutions to the lower-level
problem \eqref{eq:pp3_2}, which represents the follower subproblem
$\cP^F(\bar{x}^L,D^F_P,\hat{D}^F_S)$ with the new sensitive parameters
$\hat{D}^F_S$ and the values of the coordination variables $\bx^L$ fixed to the estimates $\bar{x}^L$. Additionally, this
lower-level problem \eqref{eq:pp3_2} enforces feasibility of the follower problem with respect to the estimates $\bar{x}^L$.

Using the equivalent Karush-Kuhn-Tucker (KKT) conditions of the linear
lower-level problem \eqref{eq:pp3_2} and the Fortuny-Amat
linearization, this bilevel problem can be recast as a mixed-integer
second-order cone program (MISOCP) \cite{gabriel2012complementarity}.

%redistributes the noise on the obfuscated data $\tilde{D}^F_S$ in order to generate new parameters $\hat{D}^F_S$ that ensure \emph{fidelity} with respect to the original Stackelberg game. %This phase takes as inputs the estimated coordination variables $\bar{x}^L$ and obfuscated data $\tilde{D}^F_S$.

% The follower can \emph{explicitly} restore fidelity with respect to its own original objective value $\cO^{F^*}$. However, due to the sequential order of the leader and follower decisions, the original leader objective value $\cO^{L^*}$ is only revealed after the privacy-preserving data $\hat{D}^F_S$ has been released, as illustrated in Figure \ref{fig:PPSM}. Therefore, fidelity on this value cannot explicitly be restored. In the original Stackelberg Problem \eqref{eq:hierarchical}, the dual coordination variables of the follower $\by^F$ embed the impact of the follower decisions on the leader objective value. Therefore, it is expected that restoring fidelity on the original coordination variables $y^{F^*}$ \emph{implicitly} ensures fidelity with respect to the original leader objective value $\cO^{L^*}$.

% The obfuscated data $\hat{D}^F_S$ returned by the PPSM mechanism is 
% $\alpha$-indistinguishable from $\hat{D}^F_S$, since Steps [2a]--[3b] 
% can be seen as a post-processing transformation. Indeed, they use 
% exclusively public information and data-independent models.

\begin{theorem}
For given positive real $\alpha$, PPSM satisfies $\alpha$-indistinguishability.
\end{theorem}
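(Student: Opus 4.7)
The plan is to combine Theorem~\ref{th:m_lap} with Theorem~\ref{th:postprocessing}. The central observation I would exploit is that PPSM touches the sensitive parameters $D^F_S$ exactly once---in Step [1]---while every subsequent step operates only on the obfuscated release $\tilde{D}^F_S$ together with public quantities. Once this data-flow property is established, $\alpha$-indistinguishability will follow immediately from post-processing immunity.

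First, I would apply Theorem~\ref{th:m_lap} to Step [1]. Under the adjacency relation $\sim_\alpha$, the identity query has sensitivity $\Delta_Q = \alpha$; with $\epsilon=1$ fixed in the paper, the noise scale $\Delta_Q/\epsilon = \alpha$ prescribed by the Laplace mechanism coincides with the $\Lap(\alpha)^n$ injected in \eqref{eq:setp1}. Hence $\tilde{D}^F_S$ is $\alpha$-indistinguishable.

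Second, I would trace the computation through Steps [2a]--[3b] and argue that each one is a post-processing of $\tilde{D}^F_S$. Concretely: Step [2a] invokes $\cM^L(D^L, D^F_P, \tilde{D}^F_S)$ on public data and $\tilde{D}^F_S$; Step [2b] solves \eqref{eq:step_2b} using only $D^L$ and the estimate $\bar{y}^F$; Step [3a] invokes $\cM^F(\bar{x}^L, D^F_P, \tilde{D}^F_S)$; and Step [3b] solves the bilevel program \eqref{eq:pp3} whose objective \eqref{eq:pp3_0} measures distance to $\tilde{D}^F_S$---not to $D^F_S$---and whose constraints reference only $\bar{x}^L$, $\bar{\cO}^{F^*}$, $\bar{y}^{F^*}$, $D^F_P$, and $\hat{D}^F_S$ itself. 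Thus the entire pipeline following Step [1] can be expressed as a map $g$ (possibly internally randomized, but independent of $D^F_S$) applied to $\tilde{D}^F_S$.

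Finally, Theorem~\ref{th:postprocessing} yields that $g \circ \MLap$ is $\alpha$-indistinguishable, which is the desired conclusion. I expect the only nontrivial part to be the bookkeeping in the preceding paragraph---verifying by inspection of \eqref{eq:pp3} that $D^F_S$ is never re-read after Step [1], and in particular that the fidelity objective is anchored at $\tilde{D}^F_S$ rather than at the raw $D^F_S$. Once that check is made, no further analytical work is required.
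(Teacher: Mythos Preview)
Your proposal is correct and follows essentially the same approach as the paper: apply Theorem~\ref{th:m_lap} to Step~[1] to obtain $\alpha$-indistinguishability of $\tilde{D}^F_S$, then observe that Steps~[2a]--[3b] operate only on $\tilde{D}^F_S$ and public information, so Theorem~\ref{th:postprocessing} transfers the guarantee to the final output $\hat{D}^F_S$. Your write-up is in fact more detailed than the paper's---you explicitly compute the sensitivity of the identity query under $\sim_\alpha$ and verify step by step that $D^F_S$ is never re-read---but the argument is structurally identical.
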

\begin{proof}
The privacy phase of PPSM (Step [1]) produces a gas demand profile $\tilde{D}^F_S$ using the Laplace mechanism with parameter $\lambda = \alpha$ (\Cref{eq:setp1}). By \Cref{th:m_lap}, the resulting demand profile satisfies $\alpha$-indistinguishability. 
Notice that Steps [2a]--[3b] take as input the privacy-preserving gas profiles $\tilde{D}^F_S$ generated during the privacy phase, and uses exclusively the public information---e.g., the problem model and the historical data.  
Therefore, by the post-processing immunity property of differential privacy (\Cref{th:postprocessing}) the output $\hat{D}^F_S$ of the fidelity phase satisfies $\alpha$-indistinguishability.
\end{proof}

\section{Error Analysis}
\label{sec:theo}

This section analyzes the impact of the data perturbation induced by $\text{PPSM}$ on the optimal objective value of both agents in the privacy-preserving problem. 
The results below hold under the assumption that the estimated values $\bar{x}^{L}$, $\bar{\cO}^{F^*}$, and $\bar{y}^{F^*}$ are accurate. 

\begin{theorem}[Error]
\label{thm:error}
  After the fidelity phase, the expected error induced by $\text{PPSM}$ on  the original, sensitive, parameters $D^F_S$ is bounded by the inequality: 
  $$
    \EE[ \| \hat{D}^{F^*}_S - D^F_S \| ] \leq 
    4 \alpha^2, 
  $$
  where $\hat{D}^{F^*}_S$ is the solution to Problem \eqref{eq:pp3}. %and
    %$\tilde{D}^F_S$ is the $\alpha$-indistinguishable profile generated by the Laplace mechanism, 
  %$D^F_S$ is the original dataset. %and $\lambda$ is the parameter of the Laplace distribution utilized in the PPSM Privacy phase.
\end{theorem}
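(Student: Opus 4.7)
The plan is to exhibit the true sensitive data $D^F_S$ as a feasible point of the fidelity problem \eqref{eq:pp3}, use the optimality of $\hat{D}^{F*}_S$ to control $\|\hat{D}^{F*}_S - \tilde{D}^F_S\|$, and then pass to $\|\hat{D}^{F*}_S - D^F_S\|$ by the triangle inequality. Taking expectation over the Laplace noise closes the argument.

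First, I would verify that the candidate $\hat{D}^F_S := D^F_S$ (together with the primal-dual solution of $\cP^F(\bar{x}^L, D^F_P, D^F_S)$) is feasible for \eqref{eq:pp3}. The lower-level constraint \eqref{eq:pp3_2} is immediate since the follower subproblem is solvable at the true data. For the fidelity constraints \eqref{eq:pp3_1a}--\eqref{eq:pp3_1b}, the theorem's hypothesis states that the estimates $\bar{\cO}^{F*}$ and $\bar{y}^{F*}$ are accurate, i.e., they coincide with the true objective value and dual multipliers of $\cP^F(\bar{x}^L, D^F_P, D^F_S)$. Substituting $\hat{D}^F_S = D^F_S$ in \eqref{eq:pp3_2} therefore yields $\hat{\cO}^F = \bar{\cO}^{F*}$ and $\hat{\by}^F = \bar{y}^{F*}$, so both slacks vanish.

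Next, I would invoke the optimality of $\hat{D}^{F*}_S$ with respect to the objective \eqref{eq:pp3_0}, which gives
\[
\|\hat{D}^{F*}_S - \tilde{D}^F_S\|_2^2 \;\leq\; \|D^F_S - \tilde{D}^F_S\|_2^2.
\]
The triangle inequality then yields
\[
\|\hat{D}^{F*}_S - D^F_S\|_2 \;\leq\; \|\hat{D}^{F*}_S - \tilde{D}^F_S\|_2 + \|\tilde{D}^F_S - D^F_S\|_2 \;\leq\; 2\,\|\tilde{D}^F_S - D^F_S\|_2,
\]
so $\|\hat{D}^{F*}_S - D^F_S\|_2^2 \leq 4\,\|\tilde{D}^F_S - D^F_S\|_2^2$. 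By construction $\tilde{D}^F_S - D^F_S \sim \Lap(\alpha)^n$, whose coordinates have second moment $2\alpha^2$, so taking expectations yields a bound of the form $4c\alpha^2$, recovering the stated inequality.

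The main obstacle is the feasibility step: the entire argument relies on the accuracy assumption for $\bar{\cO}^{F*}$ and $\bar{y}^{F*}$, which is precisely what guarantees zero slack in \eqref{eq:pp3_1a}--\eqref{eq:pp3_1b} at the candidate $D^F_S$. Under biased estimates from $\cM^L$ or $\cM^F$, $D^F_S$ itself may violate these constraints, and the optimality-based shortcut would fail; recovering a bound in that regime would require a stability analysis of the bilevel problem \eqref{eq:pp3} in its parameters, which is beyond the current statement.
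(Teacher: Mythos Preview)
Your proposal is correct and follows the same route as the paper: feasibility of the true data $D^F_S$ in \eqref{eq:pp3} (via the accuracy assumption), optimality of $\hat{D}^{F*}_S$ for the objective \eqref{eq:pp3_0}, the triangle inequality, and the second moment of the Laplace noise. If anything, you are more explicit than the paper about why the fidelity constraints \eqref{eq:pp3_1a}--\eqref{eq:pp3_1b} are satisfied at $D^F_S$, and you correctly flag that the final constant depends on the dimension through the Laplace variance.
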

\begin{proof}
  Denote with $\tilde{D}^F_S = D^F_S + \Lap(\alpha)$ the privacy-preserving version of the gas demand profile obtained by the Laplace mechanism. We have that:
  \begin{align*}
           \| \hat{D}^{F^*}_S - D^F_S \|_{2} 
    & \leq \| \hat{D}^{F^*}_S - \tilde{D}^F_S \|_{2} + 
           \| \tilde{D}^F_S - D^F_S \|_{2} \label{eq:p2}\\
   &\leq 2 \|\tilde{D}^F_S - D^F_S\|_{2} \leq 4 \alpha^2.
  \end{align*}
  The first inequality follows from the triangle inequality on norms. 
  The second inequality follows from:
  $$
      \| \hat{D}^{F^*}_S - \tilde{D}^F_S \|_{2} \leq \|\tilde{D}^F_S - D^F_S\|_{2}
  $$
  by optimality of 
  $\langle \hat{D}^{F^*}_S, \hat{\bx}^*\rangle$ 
  and the fact that 
  $\langle D^F_S, \bx^*\rangle$ is a feasible solution to constraints 
  \eqref{eq:pp3_1a} to \eqref{eq:pp3_2} (Problem \ref{eq:pp3}), 
  or, similarly, by optimality of
  $\langle \hat{D}^{F^*}_s, \hat{\bx}^*, \hat{\by}^*\rangle$ 
  and the fact that 
  $\langle D^F_S, \bx^*, \by^*\rangle$ is a feasible solution to constraints 
  \eqref{eq:pp3_1a} to \eqref{eq:pp3_2} (Problem \ref{eq:pp3}).
  The third inequality follows directly from the variance of the Laplace distribution.
\end{proof}

% \begin{proofschetch}
%   The proof relates the distance between $\hat{D}^{F^*}_S$ and $D^F_S$ 
%   to that between the Laplace-obfuscated $\tilde{D}^F_S$ and
%   $D^F_S$, and relies on the triangular inequality on norms, optimality 
%   of $\hat{D}^{F^*}_S$, and the fact that the Laplace mechanism is an  
%   unbiassed estimator.
% \end{proofschetch}

The next theorem bounds the difference in objective value for the
leader problem when the leader problem is convex and the coordination constraints are linear.

\begin{theorem}[Cost of privacy]
\label{thm:fidelity}
Consider a \emph{convex} leader problem whose coordination constraints \eqref{eq:coordination} 
are \emph{linear};
\begin{subequations} \label{eq:convex_leader}
\begin{alignat}{2}
\textstyle \hat{{\cal O}}^{L^*} = & \min_{\bx^L,\by^F} && {\cal O}^{L}\left(\bx^L,D^L\right) \\
& \text{s.t.} && \bx^L \in \mathcal{F}^L\left(D^L\right) \\
& \quad && A \bx^L + B \by^F \geq b \label{eq:coordination} \\
%& \quad && \hat{\by}^F = \text{dual sol. of } \tilde{\cP}^F \eqref{eq:hierarchical2}\text{--}\eqref{eq:hierarchical3},
& \quad && \by^F = \text{dual sol. of } \cP^F(\bx^L, \!D^F_P, \!\hat{D}^F_S),
\end{alignat}
\end{subequations}
where $\cP^F(\bx^L, \!D^F_P, \!\hat{D}^F_S)$ uses the privacy-perserving
$\hat{D}^F_S$.  

\noindent After the fidelity phase, the error induced by
$\text{PPSM}$ on the objective value $\hat{\cO}^{L^*}$ of the leader
problem \eqref{eq:convex_leader} is bounded by the inequality:
\begin{equation} \label{eq:bound}
    \| \hat{\cO}^{L^*} - \cO^{L^*} \| 
      \leq \eta_d \|B^T y^{L^*}\|_1, 
\end{equation}
where $y^{L^*}$ denotes the dual variables values associated with the constraints \eqref{eq:coordination} of the original leader problem $\cP^L(D^L,D^F_P,D^F_S)$.
\end{theorem}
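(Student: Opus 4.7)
The plan is to view the leader problem \eqref{eq:convex_leader} as a parametric convex program in which the follower dual $\by^F$ enters only through the linear coordination constraint \eqref{eq:coordination}, which I rewrite as $A\bx^L \geq b - B\by^F$. Under the standing assumption that the estimates are accurate, $\bar{y}^{F^*}$ coincides with the true follower dual $y^{F^*}$ at $D^F_S$, so the fidelity constraint \eqref{eq:pp3_1b} gives $\|\hat{\by}^F - y^{F^*}\|_\infty \leq \eta_d$. Consequently, $\cO^{L^*}$ and $\hat{\cO}^{L^*}$ are the optimal values of the \emph{same} convex program but with right-hand sides $b - B y^{F^*}$ and $b - B \hat{\by}^F$ respectively, differing only by the shift $B(y^{F^*} - \hat{\by}^F)$.

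Next I would invoke the standard sensitivity analysis of convex optimization. Define the value function
\[
V(p) \;=\; \min_{\bx^L}\{\cO^L(\bx^L,D^L) : \bx^L \in \cF^L(D^L),\; A\bx^L \geq p\},
\]
which is convex in $p$ because $\cO^L$ and $\cF^L$ are convex. By Lagrangian duality, the optimal multiplier $y^{L^*}$ attached to the coordination constraint in the original problem $\cP^L(D^L,D^F_P,D^F_S)$ is, up to sign, a subgradient of $V$: $-y^{L^*} \in \partial V(b - B y^{F^*})$. Combining the subgradient inequality at the original point with its analogue at the perturbed point $b - B\hat{\by}^F$ --- or, equivalently, invoking LP sensitivity, which yields a locally exact first-order expansion of $V$ within the optimal basis --- I would obtain
\[
\big|\hat{\cO}^{L^*} - \cO^{L^*}\big| \;\leq\; \big|{y^{L^*}}^{\!\top} B\,(\hat{\by}^F - y^{F^*})\big|.
\]
Hölder's inequality then closes the argument:
\[
\big|{y^{L^*}}^{\!\top} B\,(\hat{\by}^F - y^{F^*})\big| \;\leq\; \|B^\top y^{L^*}\|_1 \cdot \|\hat{\by}^F - y^{F^*}\|_\infty \;\leq\; \eta_d \|B^\top y^{L^*}\|_1,
\]
which is exactly \eqref{eq:bound}.

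The main obstacle is the two-sided sensitivity step. Convexity of $V$ alone only yields a one-sided subgradient inequality, so obtaining a symmetric bound in $|\cdot|$ requires either restricting to the linear-programming setting (where $V$ is piecewise linear and first-order exact within an optimal basis), or using the accurate-estimation assumption to guarantee that the perturbation $B(\hat{\by}^F - y^{F^*})$ stays inside the local linearity region of $V$ around $b - B y^{F^*}$. I would make this assumption explicit up front; once it is in place, everything else reduces to a direct application of convex duality, the fidelity constraint \eqref{eq:pp3_1b}, and Hölder's inequality.
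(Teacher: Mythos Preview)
Your argument and the paper's are built on the same core idea: the fidelity constraint \eqref{eq:pp3_1b} bounds the deviation of the follower dual by $\eta_d$ in $\ell_\infty$, this enters the leader only as a right-hand-side perturbation of the linear coordination constraint, and LP/convex sensitivity then converts that perturbation into an objective gap governed by the leader's coordination duals $y^{L^*}$.

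The execution differs. The paper introduces $\by^F=\bar y^{F^*}+\eta_d\bm\epsilon$ with $\bm\epsilon\in[-1,1]^J$, forms a perturbed leader problem, and then passes through a robust worst-case step (inner minimization over $\bm\epsilon$) before invoking sensitivity; this yields the intermediate quantity $\eta_d\sum_{i,j}|B_{ij}|\,y_i^{L^*}$, which is in fact \emph{larger} than the stated $\eta_d\|B^\top y^{L^*}\|_1$. Your route via the value function $V(p)$ and H\"older's inequality is more direct and lands exactly on the claimed bound. Both proofs rely on the same unstated regularity---that the perturbation is small enough to stay within the region where the first-order (dual-based) sensitivity is exact---and you are right to flag that the two-sided inequality does not follow from convexity of $V$ alone; the paper's ``impact of this small perturbation'' and ``by analogy'' steps carry precisely the same hidden assumption.
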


\begin{proof}
Equation \eqref{eq:pp3_1b} enforces fidelity w.r.t.~the dual coordination variables values $\bar{y}^{F^*}$, as estimated in step [3a]. Therefore, the upper- and lower-level variables of the DP leader problem \eqref{eq:convex_leader}, denoted $\cP^L(D^L,D^F_P,hat{D}^F_S)$, can be defined as the perturbed variables: $\by^F= \bar{y}^{F^*} + \eta_d \bm{\epsilon}$ and $\bx^L = x^{L^*} + \bm{\delta x^L}$; where, $x^{L^*}$ are the solutions to the leader problem $\cP^L(D^L,D^F_P,D^F_S)$ with original sensitive parameters $D^F_S$, and the variables $\bm{\epsilon} \in \mathcal{U}$ such that $\mathcal{U} \subset \mathbb{R}^J = \left\{ \left[\epsilon_1,\ldots,\epsilon_J\right] \mid -1 \leq \epsilon_j \leq 1, \ \forall j \in \{1,\ldots,J\} \right\} $ ($J$ is the dimensionality of the dual variables $\by^F$)).

As a result, with a change of variables in Problem \eqref{eq:convex_leader}, the difference between the objective value $\hat{\cO}^{L^*}$ of the DP leader problem $\cP^L(D^L,D^F_P,\hat{D}^F_S)$ and the objective value $\cO^{L^*}$ of the original leader problem $\cP^L(D^L,D^F_P,D^F_S)$ can be expressed as the optimal objective value of the following \emph{perturbed} leader problem:
\begin{subequations} \label{eq:perturbed_leader}
\begin{align}
& \hat{\cO}^{L^*} - \cO^{L^*} 
= \min_{\bm{\delta x^L},\bm{\epsilon}} \quad \cO^{L}\left(\bm{\delta x^L},D^L\right)\\
&  \text{s.t} \quad x^{L^*} + \bm{\delta x^L} \in \mathcal{F}^L\left(D^L\right) \\ 
 & \quad \quad A \bm{\delta x^L} + B \eta_d \bm{\epsilon} \geq b - A x^{L^*} - B \bar{y}^{F^*} \\
 & \quad \quad   \bar{y}^{F^*} + \eta_d \bm{\epsilon} = \text{dual sol. of } \cP^F(\bx^L,D^F_P,\hat{D}^F_S) \\
 & \quad \quad \quad \quad \quad \quad \quad \quad \text{problem } \eqref{eq:hierarchical2}-\eqref{eq:hierarchical3}
 \end{align}
\end{subequations}
In this perturbed problem, the coordination constraints \eqref{eq:coordination}, are perturbed by the term $\eta_d \bm{\epsilon}$.

We define the following function $f: \bm{\epsilon} \in \mathcal{U} \mapsto f\left(\bm{\epsilon}\right)  \in \mathbb{R} $, as the optimal objective value of the following (parametric) optimization problem:
\begin{subequations} \label{eq:uncertain_function}
\begin{align}
& f\left(\bm{\epsilon}\right)
= \min_{\bm{\delta x^L}} \quad  \cO^{L}\left(\bm{\delta x^L},D^L\right)  \\ 
& \text{s.t} \quad  x^{L^*} + \bm{\delta x^L} \in \mathcal{F}^L\left(D^L\right)  \\
& \quad \quad  A \bm{\delta x^L} + B \eta_d \bm{\epsilon} \geq b - A x^{L^*} - B \bar{y}^{F^*}, \\
& \forall \bm{\epsilon} \in \mathcal{U}.
\end{align}
\end{subequations}
The bounds of this function over the set $\bm{\epsilon} \in \mathcal{U}$ will provide bounds to the objective value of the perturbed Problem \eqref{eq:perturbed_leader}.
For any $\bm{\epsilon} \in \mathcal{U}$, $f\left(\bm{\epsilon}\right)$ is upper-bounded by the optimal objective value of the following problem:
\begin{subequations} \label{eq:wc_leader}
\begin{align}
&  \min_{\bm{\delta x^L}} \quad \cO^{L}\left(\bm{\delta x^L},D^L\right)  \\  
& \text{s.t} \quad  x^{L^*} + \bm{\delta x^L} \in \mathcal{F}^L\left(D^L\right)  \\
&  \quad  \sum_{k=1}^{K} A_{iK} \bm{\delta x_k^L} + \min_{\bm{\epsilon} \in \mathcal{U}} \sum_{j=1}^{J} B_{ij} \eta_d \bm{\epsilon_j} \nonumber \\
& \quad \ \geq b_i - \sum_{k=1}^{K} A_{ik} x_k^{L^*} - \sum_{j=1}^{J} B_{ij} \bar{y}_j^{F^*} ,  \ \forall i \in \{1,...,I\}. \label{eq:uncertain_function_bounds1}
\end{align}
\end{subequations}
Note that, \label{eq:uncertain_function_bounds1} represents a \emph{worst-case} scenario of the perturbed coordination constraints \eqref{eq:coordination}.

The solutions to the inner minimization in equation \eqref{eq:uncertain_function_bounds1} is $-\sum_{j=1}^{J} | B_{ij}| \eta_d$. Therefore, in the worst-case leader problem \eqref{eq:wc_leader}, the right-hand side of each coordination constraint \eqref{eq:uncertain_function} is perturbed by this small amount $-\sum_{j=1}^{J} | B_{ij}| \eta_d$. The impact of this small perturbation on the leader objective value is $\sum_{ij} | B_{ij} y_i^{L^*} | \eta_d$, where $y_i^{L^*}$ is the optimal dual variable associated with each coordination constraint of the leader problem with the coordination variables $\by^F$ fixed to the values $\bar{y}^{F^*}$. If the estimates $\bar{y}^{F^*}$ are accurate, this also represents the dual variables of the leader problem $\cP^L(D^L,D^F_P,D^F_S)$ with the original sensitive data $D^F_S$.

By analogy, we can straightforwardly deduce that a lower-bound to $f(\bm{\epsilon})$ is given by $-\sum_{ij} | B_{ij} y_i^{L^*} | \eta_d$. Therefore, $|f(\bm{\epsilon})|$ (and the \emph{cost of privacy}) are bounded as follows:
\begin{equation}
\left|\hat{\theta}^{L^*} - \theta^{L^*}\right| \leq \eta_d \left|\left|B^T y^{L^*}\right|\right|_1.
\end{equation}
\end{proof}

% \begin{proofschetch}
% The proof relies on the best- and worst-case counterparts of the
% linear coordination constraints \eqref{eq:coordination}.  The fidelity
% criteria \eqref{eq:pp3_1b} can be reformulated as a perturbation on
% the dual variables $\by^F= \bar{y}^{F^*} + \eta_d
% \bm{\epsilon}$, where each component of the random vector
% $\bm{\epsilon}$ is such that $\left| \epsilon_i \right| \leq 1$.
% Therefore, the difference $(\hat{\cO}^{L^*} - \cO^{L^*})$ can be upper-
% and lower-bounded by the objective value of the 
% \emph{perturbed} leader problem, in which the right-hand sides of the
% coordination constraints \eqref{eq:coordination} are perturbed by the
% small quantity $\pm \eta_d \left|\left|B^T \right|\right|_1$.

%These bounds correspond to a best- and worst-case scenario over the realization of the random vector $\bm{\epsilon}$. 
%As the dual variables values $y^{L^*}$ represent the sensitivity of the leader objective with respect to the right-hand sides of the coordination constraints \eqref{eq:coordination}, bound \eqref{eq:bound} follows.
%the impact of this small perturbation on the leader objective results in.
%is $\eta_d \left|\left|B^T y^{L^*} \right|\right|_1$.
% \end{proofschetch}

While fidelity with respect to the follower objective value is
\emph{explicitly} enforced by Constraint \eqref{eq:pp3_1a}, the result
above ensures fidelity with respect to the leader objective value. This
fidelity is \emph{implicitly} enforced by Constraint \eqref{eq:pp3_1b}
on the follower coordination variables $\by^F$, and their impact
on the leader objective value via the coordination constraints 
\eqref{eq:coordination}. This sub-optimality in the leader cost
represents the so-called \emph{cost of privacy}.

%The result above applies directly to the domain of interest of this work, whose Stackelberg game is modeled using linear coordination constraints. \lesia{is it completely correct to claim that as the GAUC is not convex (binary variables)? It may need a bit more work to generalize?}

%%%%%%%%%%%%%%%%%%%%%%%%%%%%%%%%%%%%%%%%%%%%%%%%%%%%%%%%%%%%%%%%%%%%%
\section{Experimental Evaluation}
\label{sec:experiments}

\def\Mp{\textsl{PPSM}_{primal}}
\def\Md{\textsl{PPSM}_{dual}}

The performance of the proposed PPSM is illustrated on the motivation
problem introduced in Section \ref{sec:preliminaries}. The leader
represents the GAUC problem, and the two followers represent the EM
and GM problems. The PPSM aims at preserving privacy on the sensitive
gas demand profiles ($D^g_S$) in the GM, and fidelity with respect to
the original Stackelberg game. Fidelity constraints are
\textit{explicitly} enforced on the original objective value of the GM
$(\cO^{g^*})$ and gas prices $(\by^{g^*})$. Additionally, the
\textit{implicit} impact of the PPSM on the original objective values
of the GAUC ($\cO^{uc^*}$) and the EM ($\cO^{e^*}$) is analyzed.

\smallskip\noindent\textbf{Case study setup}. 
The PPSM is evaluated on a test system that represents the joint
natural gas and electricity systems in the Northeastern US
\cite{byeon2019unit}. The system is composed of the IEEE 36-bus power
system \cite{allen2008combined} and a gas transmission network
covering the Pennsylvania-to-northeast New England area.  An in-depth
description of the GAUC problem and case-study setup will be provided
upon request.

This case study analyzes the performance of PPSM under various
operating conditions of the gas and electricity systems. The
electricity demand profile is uniformly increased by a stress factor
ranging from $30\%$ to $60\%$, and the gas demand profile is increased
by a stress factor ranging from $10\%$ to $130\%$, producing
increasingly stressed and difficult operating conditions.  The
experiments compare the proposed PPSM to a version (PPSM$_p$) that
omits the fidelity constraint on the dual variables
\eqref{eq:pp3_1b}. Both versions are compared with the standard
Laplace mechanism for varying values of the privacy parameter $\alpha
\in \{0.1, 1, 10\}\times 10^2$ MWh, and the fidelity parameters
$\eta_p = \eta_d \in \{0.01, 0.1, 10.0\}\%$ of the original objective
value of the GM $(\cO^{g^*})$ and gas prices $(\by^{g^*})$,
respectively. In the GAUC problem, the original, sensitive, gas demand 
vector is denoted $D^{g}_S$ (in lieu of $D^F_S$). Notice that, 
in the highest stress factor adopted,  this demand vector has minimum, 
median, and maximum values:  $0$, $3.38 \times 10^2$, $98.31 \times 10^2$, respectively. 
Therefore, the privacy parameters adopted ensure a very low privacy risk.

To induce uncertainty on the gas cost estimates $\bar{y}^g$ (in lieu of $\bar{y}^F$), 
the experiments use a noisy version resulted by perturbing the original estimates $\by^{g*}$ using Normal noise with standard deviation equal to $10\%$ of the average value of $\by^{g*}$. The experiments also evaluated benchmarks that use precise cost estimates. The results trends are consistent with those provided here.

We generate $30$ repetitions for each test case and report average results in all experiments and impose a 1-hour wall-clock limit.  A wall-clock limit of 1 hour is given to generate and solve each of the instances. The resolution of the privacy-preserving demand profiles (phases [1] and [2] of PPSM) takes less than 30s for any of the instances. 

% The privacy-preserving mechanism (PPSM phases [1] and [2]) is implemented in Python 3.0 with Gurobi 8.1. The Sequential problem (phase [3]) uses the c++ implementation of \cite{byeon2019unit}. A wall-clock limit of 1 hour is given to generate and solve each of the instances. The resolution of the privacy-preserving demand profiles (phases [1] and [2] of PPSM) takes less than 30s for any of the instances. 

\begin{table}[!t]
\centering
\resizebox{0.85\linewidth}{!}
{
\begin{tabular}{llrr @{\hspace{5pt}}|@{\hspace{5pt}} rrr}
\toprule
$\cM$ & $\alpha$ & sat.(\%) & $\Delta_{D^g_S}$ (L1) & $\Delta_{\cO^{uc}} (\%)$ &   $\Delta_{\cO^{e}} (\%)$&   $\Delta_{\cO^{g}} (\%)$ \\
%$\|D^F_S - \hat{D}^F_S\| (\%)$ \\
\midrule
\multirow{3}{*}{Laplace}
       & 0.1   &  71.49 &   5.08    & 0.1237 &   0.3222 &   0.3335 \\
       & 1.0   &  18.13 &  50.85    & 1.2959 &   3.5538 &   3.5540 \\
       & 10.0  &   4.47 & 508.55    & 22.940 &   52.414 &  52.414 \\
\cline{2-3}
\cline{4-7}
\multirow{3}{*}{$\text{PPSM}_p$}
       & 0.1   & 98.45 &  4.45 &    0.0631 &   0.1503 &   0.1503 \\
       & 1.0   & 91.30 & 21.87 &    0.1216 &   0.1764 &   0.1761 \\
       & 10.0  & 80.71 & 24.31 &    0.2143 &   0.3851 &   0.3853 \\
\cline{2-3}
\cline{4-7}
\multirow{3}{*}{$\text{PPSM}$}
       & 0.1   & 99.10 &  3.89 &   0.0192 &   0.1056 &   0.1057 \\
       & 1.0   & 95.09 & 12.71 &   0.0698 &   0.1465 &   0.1465 \\
       & 10.0  & 91.35 & 14.16 &   0.1367 &   0.2330 &   0.2331 \\
\bottomrule
\end{tabular}
}
\caption{Left: Satisfactory instances (\%) and L1 errors (MWh) on the gas demands ($\Delta_{D^g_S}$) for varying indistinguishability parameters $\alpha$, and $\eta_p = \eta_d =0.1 \%$ of the leaders objective value and the dual variables values, respectively. 
Right: Errors (\%) on the leader objective ($\Delta_{\cO^{uc}}$) and followers' objectives ($\Delta_{\cO^e}$ and $\Delta_{\cO^g}$).
\label{tab:1}}
\end{table}

\subsubsection{Limits of The Laplace Mechanism}
\label{sec:laplace_limits}

This section studies the applicability of the Laplace mechanism to our context of interest. 
Table \ref{tab:1} (left) reports the percentage of the feasible solutions (over $1260$ instances) across different values of the privacy parameter $\alpha$. It compares the Laplace mechanism with $\text{PPSM}_p$ and $\text{PPSM}$. 
%that use, respectively, the primal and both the primal and the dual fidelity constraints. 
When $\alpha>0.1$ the Laplace-obfuscated gas demands rarely produce a feasible solution to the GAUC problem. 
\emph{These results justify the need for more advanced privacy-preserving mechanisms for Stackelberg games, and hence the proposed PPSM}. 
In contrast, the PPSMs result in a much higher number of feasible solutions. Indeed, all ``unsolved'' PPSM cases are due to timeout and not as a direct result of infeasibility.  
Additionally, we verified that the two PPSMs are always able to find a feasible solution to the GM follower problem with the gas demand profile $\hat{D}^g_S$.
%$\cP^F(\bar{x}^F\!\!, D^F_P, \hat{D}^F_S)$

Table \ref{tab:1} (left) also reports the L1 distance $\Delta_d$
between the original gas demand $D^g_S$ and the privacy-preserving
versions obtained with each of the mechanisms
analyzed. Unsurprisingly, the L1 errors increase with the increasing
of parameter $\alpha$, as larger values of $\alpha$ induce more noise.
However, the L1 errors introduced by the PPSM are much more contained
than the Laplace ones, producing more than an order of magnitude more
accurate results for the larger privacy parameters.  \emph{These
  results indicate that the highly-perturbed demand profiles induced
  by the Laplace mechanism lead to infeasibility in the GAUC problem,
  whereas both PPSM and PPSM$_p$ manage to restore
  \textit{feasibility} of the post-processed demand profiles.}

\subsubsection{Leader and Follower Objectives}

The next results evaluate the ability of PPSM to preserve the optimal objective values of the leader and the follower problems.  
Table \ref{tab:1} (right) tabulates the errors, in percentage, on the objective costs of the GAUC problem (leader), the EM problem and the GM problem (followers) at varying indistinguishability parameters $\alpha$, and for a fixed fidelity parameter $\eta =\eta_p=\eta_d = 0.1\%$.
%and for fixed $\eta$ value $=0.01\%$. 
The errors $\Delta_{\cO}$ are defined as $\frac{|\cO^* - \tilde{\cO^*}|}{\cO} \%$, where $\cO \in \{\cO^{uc}, \cO^{e}, \cO^{g}\}$ and are computed in expectation over the feasible instances only. 
Parameter $\alpha$ controls the amount of noise being added to the gas demand profiles, therefore, the objective costs are closer to their original values when $\alpha$ is small.
\emph{Observe that the PPSMs induce objective costs differences that are from one to two orders of magnitude more accurate than those induced by the Laplace mechanism, and that are at most $0.4\%$ of the original objective costs.}
Additionally, $\text{PPSM}$ is consistently more accurate than $\text{PPSM}_p$; By enforcing fidelity of the coordination variables $\by^g$, PPSM better limits the impact of the noise on the leader objective (GAUC), which in turn results in more faithful solutions for the followers' problems.

\begin{table}
\centering
\resizebox{0.7\linewidth}{!}
{
\begin{tabular}{llrrr}
\toprule
     $\cM$ & $\eta$ & $\Delta_{\cO^{uc}} (\%)$ &   $\Delta_{\cO^e} (\%)$&   $\Delta_{\cO^g} (\%)$ \\
\midrule
\multirow{1}{*}{Laplace}
	 % & 0.1  & -- &   0.1568 &   0.3662 &   0.3663 \\
  %    & 1.0  & -- &   1.6065 &   3.4444 &   3.4445 \\
     & NA &  22.9400 &  52.4141 &  52.4141 \\
\cline{2-5}
\multirow{3}{*}{$\textsl{PPSM}_{p}$}
     & 0.1\% &   0.1915 & 0.3851 & 0.3851\\
     & 1.0\% &   0.1946 & 0.4102 & 0.4102\\
     & 10.0\%&   0.2224 & 0.4543 & 0.4543\\
\cline{2-5}
\multirow{3}{*}{$\textsl{PPSM}$}
     & 0.1\% &   0.1367 & 0.2330 & 0.2330\\
     & 1.0\% &   0.1242 & 0.2060 & 0.2060\\
     & 10.0\% &  0.2086 & 0.3601 & 0.3605\\
     \bottomrule
\end{tabular}
}
\caption{Cost objective differences (\%) at varying fidelity parameters $\eta = \eta_p = \eta_d$ \%, and indistinguishability parameter $\alpha=10$.
\label{tab:2}}
\end{table}

\begin{figure*}[!ht]
\centering
\includegraphics[width=0.30\linewidth]{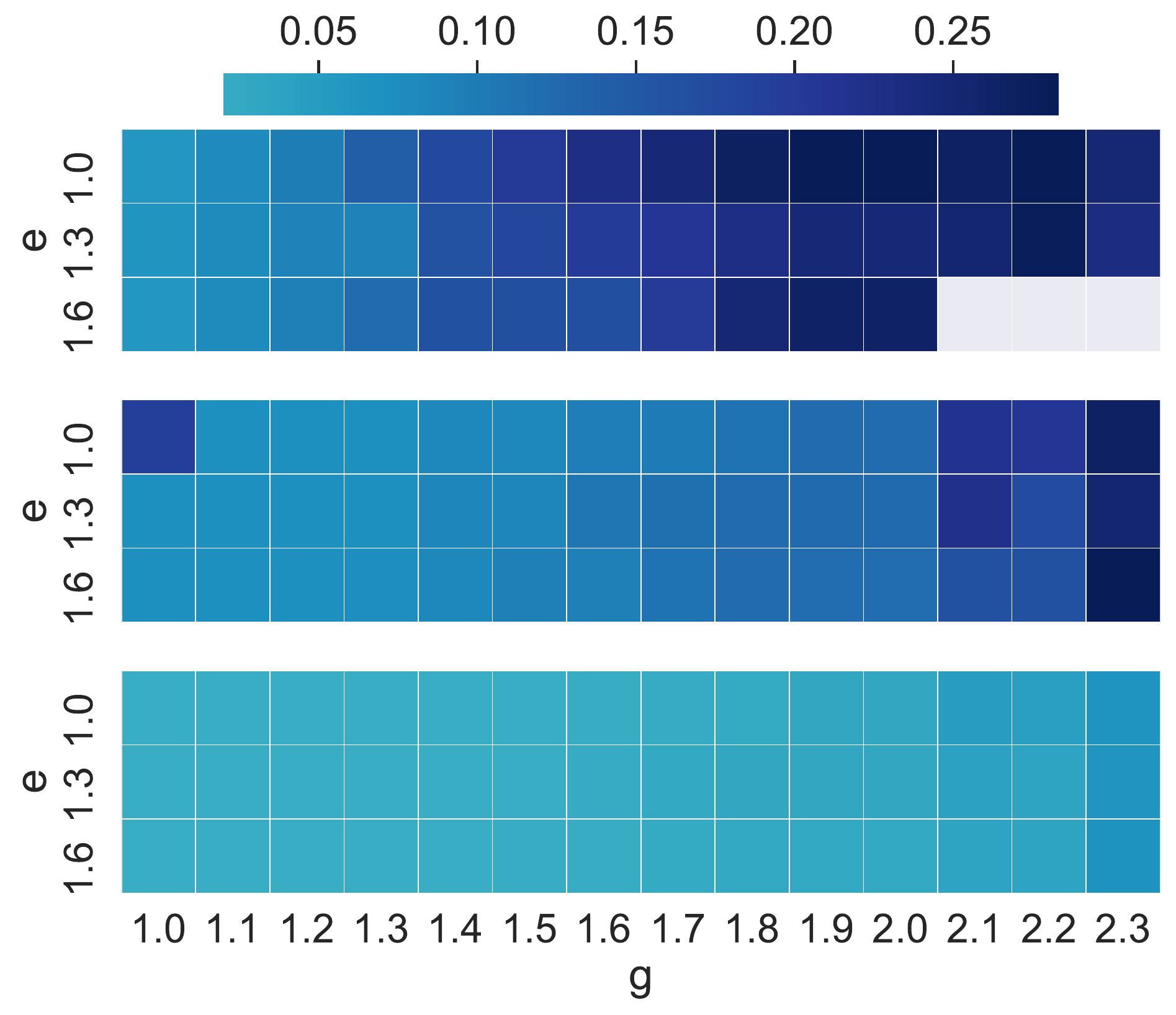}
\includegraphics[width=0.30\linewidth]{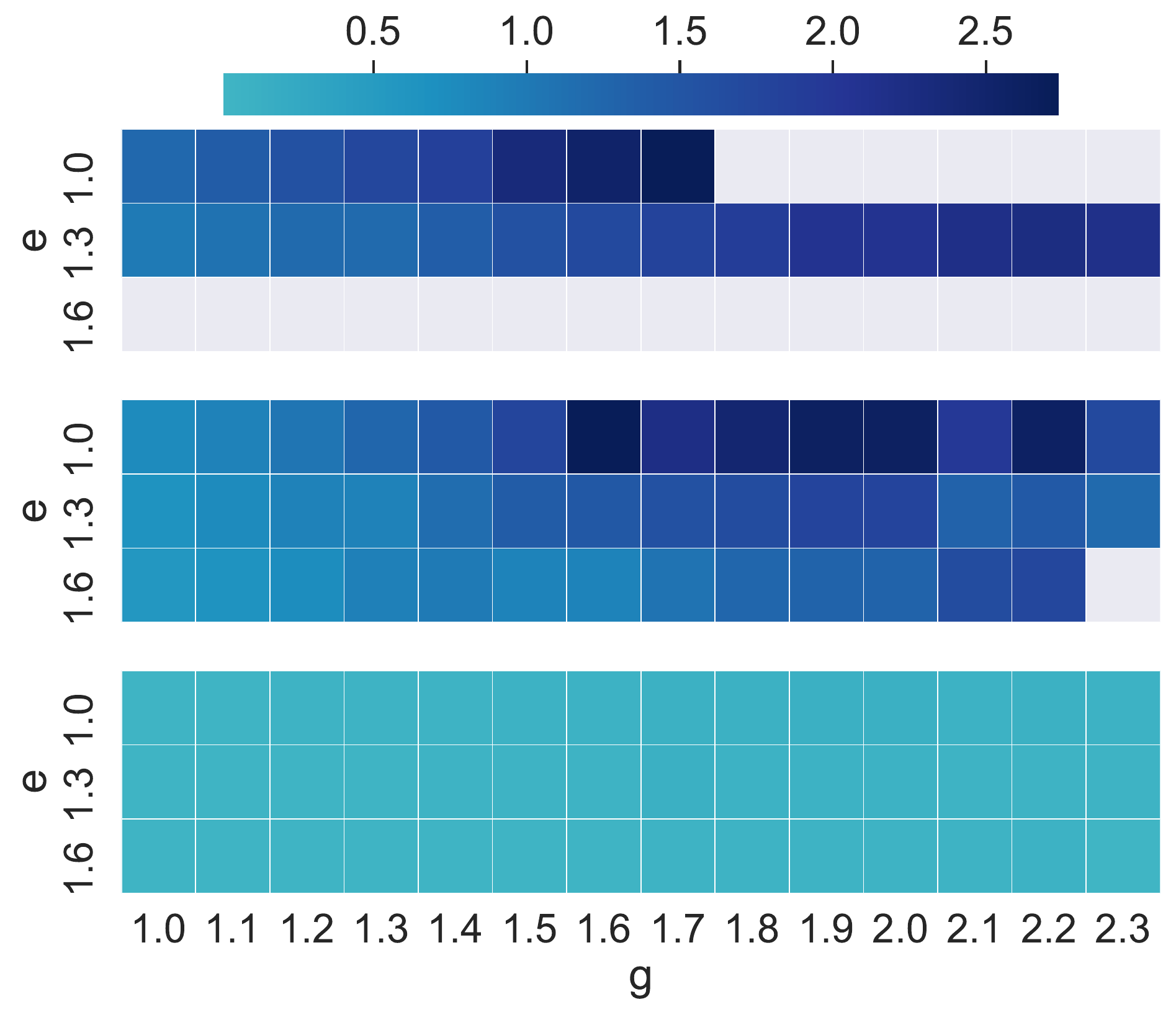}
\includegraphics[width=0.30\linewidth]{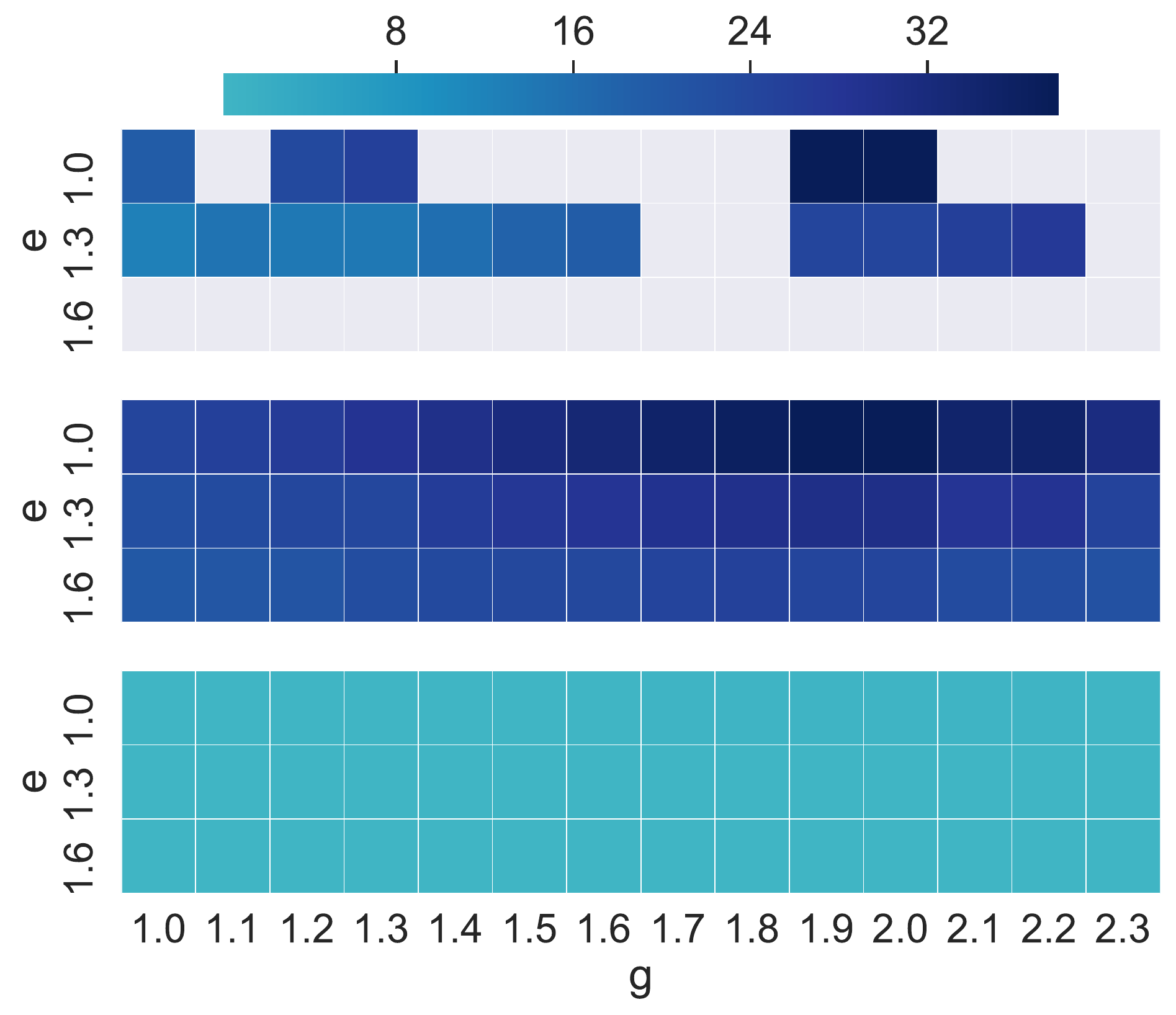}

\caption{GAUC objective cost difference (\%) at varying  gas (g) and electricity (e) stress levels for $\tilde{D}^g_S$ obtained via \emph{Laplace} (top), \emph{$\text{PPSM}_p$} (middle), \emph{$\text{PPSM}$} (bottom). Privacy: $\alpha\!=\!0.1$ (left), $1.0$ (center), $10.0$ (right). Fidelity: $\eta_p\!=\!\eta_d\!=\!0.   1 \%$ of respective quantities.
\label{fig:3}
}
\end{figure*}
These results are further illustrated in Table \ref{tab:2}, that analyzes the difference in objective costs at varying fidelity parameters $\eta_p$ and $\eta_d$, for a fixed privacy parameter $\alpha = 10$ (i.e., the largest privacy level attainable in our experimental setting). Once again, the results of the PPSM mechanisms are at least two orders of magnitude more precise than those obtained by the Laplace mechanism. Additionally, notice that the fidelity parameters $\eta_p$ and $\eta_d$ impact the accuracy of the privacy-preserving objective costs. Indeed, they indirectly control the deviation of the privacy-preserving GAUC and GM objectives with respect to the original counterparts. While the results differences are small, in percentage, their impact on the objective functions (in the $10^6$ order) is non-negligible.

\subsubsection{Stress Levels Analysis}
Finally, this subsection details the effect of the privacy-preserving mechanisms on the combined GAUC problem for all the electricity and gas stress levels adopted. 

Figure \ref{fig:3} reports heatmaps of the total (GAUC) objective cost difference, in percentage, at varying electricity (e) and gas (g) stress levels for the privacy-preserving data obtained via the Laplace mechanism (top), $\text{PPSM}_p$ (middle), and $\text{PPSM}$ (bottom). Each square represents the objective cost difference averaged over $30$ instances for a particular electricity and gas stress level. The darker the color, the more pronounced are the errors committed by the mechanisms, as reported in the legends on top of each subfigure. Gray squares represent the set of instances for which no feasible solution of the GAUC problem was found or when a timeout is reached. 
The illustration reports the cost differences for privacy parameters $\alpha\!=\!0.1$ (left) $\alpha\!=\!1.0$ (middle), and $\alpha\!=\!10.0$ (right).

These results illustrate three trends: 
Firstly, for every mechanism, the objective differences become more pronounced as the electricity and gas stress levels increase. This can be explained by the increased impact of the Laplace perturbations on higher values of gas demand profiles.
Secondly, they remark that the PPSMs produce privacy-preserving Stackelberg problems that are consistently more faithful to the original problems with respect to those produced by the Laplace mechanism.
Finally, they show that $\text{PPSM}$ is consistently more accurate than $\text{PPSM}_p$ across all stress levels. \emph{These results are significant, as they show the robustness of the proposed PPSMs over different electricity and natural gas demand profiles. They indicate that PPSM can provide a realistic and efficient solution for the coordination of electricity and natural gas markets.}

%%%%%%%%%%%%%%%%%%%%%%%%%%%%%%%%%%%%%%%%%%%%%%%%%%%%%%%%%%%%%%%%%%%%%
\section{Related Work}
\label{sec:related_work}

The obfuscation of data values under the lens of differential privacy is a challenging task that has been studied from several angles. Often, the released data is generated from a data synopsis in the form of a noisy histogram \cite{li2010optimizing,hay2016principled,qardaji:14,mohammed:11,xiao2010differentially}. 
These methods are typically adopted in the context of statistical queries. 
The design of markets for private data has also received considerable attention, see for instance \cite{ghosh2010selling,Niu:2018}.

% For example, Ghosh and Roth %propose a mechanism based on a variant of the multi-unit procurement auctions to 
% design a truthful mechanism that allows a data analyst to buy information from a population from which they can estimate some statistic \cite{ghosh2010selling}. Niu et al.~study mechanisms to trade noisy aggregate statistics from the perspective of a data broker in data markets \cite{Niu:2018}. 

However, all the proposals above, do not involve data used as input to a complex optimization problem, as in the case of this work. 
The closest work related to the proposal in this paper can be considered~\cite{Fioretto:18b,Fioretto:ijcai-19}, which, in the context energy networks, propose a privacy-preserving mechanism for releasing datasets that can be used as input to an \emph{optimal power flow} problem. A similar line of work uses hierarchical (bilevel) optimization for obfuscating the energy network parameters or locations while ensuring high utility for the problem of interest \cite{Fioretto:ijcai-19,Fioretto:TSG19}. 

In contrast to these studies, the proposed PPSM focuses on solving Stackelberg games in which the follower parameters are sensitive. PPSM also enforces the notion of fidelity of the privacy-preserving information with respect to the leader and follower objectives. Finally, to the best of our knowledge, this is the first DP mechanism that is applied to the coordination of sequential electricity and natural gas markets.  
%PPSM does is not a data-release mechanism and it 

%%%%%%%%%%%%%%%%%%%%%%%%%%%%%%%%%%%%%%%%%%%%%%%%%%%%%%%%%%%%%%%%%%%%%
\section{Conclusions}
\label{sec:conclusions}

This paper introduced a differentially private (DP) mechanism to protect the \emph{sensitive information} exchanged during the coordination of the sequential electricity and natural gas market clearings. The \emph{proposed Privacy-Preserving Stackelberg Mechanism (PPSM)} obfuscates the gas demand profile exchanged by the gas market, while also ensuring that the resulting problem preserves the fundamental properties of the original Stackelberg game. %Specifically,
%the privacy-preserving gas demand ensures fidelity of the costs of the leader and follower agents with respect to their original optimal costs.
% The framework relies on the notion of \emph{fidelity constraints}, used in an optimization problem  
The PPSM was shown to enjoy strong properties: It complies with the notion of DP and ensures that the outcomes of the privacy-preserving Stackelberg mechanism are close-to-optimality for each agent.
Experimental results on several gas and electricity market benchmarks based on a real case study demonstrated the effectiveness of the approach: \emph{The PPSM was shown to obtain up to two orders of magnitude improvement on the cost of the agents when compared to a traditional Laplace mechanism}.

% While the proposed methods were detailed in the context of a sequential and independent natural gas and electricity market, they open up various domains of application. For example, the market-based coordination 
% approach for electricity and district heating systems proposed in 
% \cite{mitridati2016optimal,mitridati2019bid}, the game-theoretical coordination of transmission and distribution systems developed in \cite{yuan2017hierarchical,le2019game,hassan2018energy}, or the real-time pricing scheme for coordinating the demand response from aggregators and consumers \cite{zhang2016real,momber2015retail}.

Future work will focus on several avenues, including extended theoretical bounds on the cost of privacy, studying the game-theoretic properties of this privacy-preserving Stackelberg game, accounting for uncertainty on the public data, and, studying the applicability of the PPSM to other domains.

% \paragraph{\textbf{Reproducibility}}
% The datasets, models, and source codes used in the paper will be publicly released upon publication.

%%%%%%%%%%%%%%%%%%%%%%%%%%%%%%%%%%%%%%%%%%%%%%%%%%%%%%%%%%%%%%%%%%%%%
%% bibliography: see CFP for number of permitted pages
\bibliographystyle{named}
\bibliography{bibliography,aamas}  % put name of your .bib file here

%%%%%%%%%%%%%%%%%%%%%%%%%%%%%%%%%%%%%%%%%%%%%%%%%%%%%%%%%%%%%%%%%%%%%

\appendix
\section{Gas-aware Electricity Unit Commitment Problem}

This section reviews the motivating application used in the paper. 

Due to the large penetration of Gas-Fired Power Plants (GFPPs) at the interface between electricity and natural gas systems, the operation of these energy systems is strongly interdependent. Yet, electricity and natural gas systems are operated in the day-ahead by sequential and independent markets. This lack of coordination may lead to significant economic inefficiencies and reliability risks in congested environments, as revealed by the 2014 polar vortex event in the northeastern United States. 

In order to improve the coordination between these systems while preserving the current organization of their respective markets, \cite{byeon2019unit} proposed to introduce gas network awareness into the current unit commitment problem. In this work, the coordination between electricity and natural gas markets is modeled as a Stackelberg game between a leader, i.e. the \emph{gas-aware electricity unit commitment} (GAUC), and two followers, i.e. the \emph{electricity market} (EM) and \emph{natural gas market} (GM).

\subsubsection*{\textbf{GAUC Coordination Variables}~($\bx^{uc}$)}
The GAUC is responsible for committing the bids of the electricity market participants, while anticipating the impact of these decisions on the follower, i.e. the sequential EM and GM clearings. Due to the participation of GFPPs at the interface between electricity and natural gas systems, the \emph{commitment} $\bx^{uc}$ of the selected bids of GFPPs by the GAUC impacts both the EM and GM clearings. 

Firstly, the EM uses the bids commitment $\bx^{uc}$ of the EM participants as input and dispatches these \textit{selected bids} to maximize the social welfare of the power system. Secondly, the GM dispatches its participants bids in order to maximize the social welfare in the natural gas system.

\subsubsection*{\textbf{EM and GM Coordination Variables}~($\bx^e$ and $\by^g$)}
The electricity dispatch $\bx^{e}\!\!$ of GFPPs is directly linked to their gas consumption $\bm{\gamma}^{g}\!\!$, which is used as input to the GM clearing problem. In return, the reaction of the follower, i.e., the electricity dispatch $\bx^{e}$ and gas prices $\by^{g}$, are accounted in the leader decisions through the \textit{objective cost} $\cO^{uc}$ and feasible space of the GAUC problem $\cF^{uc}$. 

Firstly, the GAUC problem objective accounts for the dispatch cost of the electricity bids dispatched $\bx^e$. 
Secondly, the GAUC problem feasible space accounts for \emph{bid-validity constraints} which embed the interdependencies between the gas prices $\by^g$ and the marginal 
electricity production cost of GFPPs.
%a new bid format which allows GFPPs to offer electricity bids which are conditional on the natural gas prices. Indeed, each electricity bid is associated with a condition on the natural gas prices for which it is considered \textit{valid}. 
In practice, these \emph{linear} coordination constraints enforce the price of the last selected GFPPs bid to be no larger than their marginal electricity production cost. These bidirectional interdependencies between the leader and the follower are characteristic of a classic Stackelberg game.

Note that, while, in this motivation problem, both the \emph{primal} and \emph{dual} coordination variables of the followers impact the leader problem, solely the dual variables $\by^g$ of the follower with sensitive data (GM) impact the leader feasible space. Therefore, this problem structure directly relates to the general Stackelberg game presented in the paper. 

\subsubsection*{\textbf{GAUC Problem}}
Therefore, the compact formulation of the GAUC as a hierarchical optimization problem is as follows:
\begin{subequations} \label{3level}
	\begin{alignat}{7}
	\cP^{uc} = 
	& \min_{\overset{\bx^{uc}\in\{0,1\}^N}{\underset{\bx^e ,  \by^g \geq \bm{0}}{}}} 
	&&  \;
	\cO^{\text{uc}} = 
	c^{uc^\top}\!\bx^{uc}  + c^{e^\top}\!\bx^e \label{3level0} \\
	& \quad \text{s.t.} && \bx^{uc} \in \cF^{uc} \label{3level1} \\
	& \quad                && A^{uc}  \bx^{uc}    +     B^{uc}  \by^g    \geq   \ b^{uc} \label{3level2} \\
	& \quad                &&  \bx^e , \by^g              \in      \text{primal and dual sol. of } \eqref{bilevel_heat_elec},  \label{3level3}
	\end{alignat}
\end{subequations}
where $N$ is the dimensionality of the commitment vector. 
The leader objective aims at minimizing the electricity system operating cost, which includes \emph{no-load} and \emph{start-up} costs $c^{uc^\top}\!\bx^{uc}$, and the \emph{cost of dispatching} the electricity bids $c^{e^\top}\!\bx^e$, constrained by the techno-economic characteristics of electricity suppliers \eqref{3level1}, and the bid-validity constraints \eqref{3level2}. 

\subsubsection*{\textbf{EM and GM Problems}}
Furthermore, due to the sequential order of EM and GM clearings, the two follower problems \eqref{3level3} can be expressed as a single hierarchical optimization problem, such that:
\begin{subequations} \label{bilevel_heat_elec}
\begin{alignat}{7}
\cP^{e} = 
	& \min_{\bx^e ,  \by^g}  \;
		&& \cO^{e} = c^{e^\top}\!\bx^e  
		\label{bilevel_heat_elec0} \\
		& \quad    \text{s.t.}   && \bx^e \in \cF^e  
		\label{bilevel_heat_elec1}  \\
		& \quad     &&  A^{e}  \bx^e + B^{e}    \bx^{uc} \geq b^{e}
		\label{bilevel_heat_elec1.2} \\
		& \quad  	&&   \by^g   \in   \text{dual sol. of }
	%%%%%%%%%%%%%%
	&&  \cP^{g} =  \min_{\bx^g} \;
		\cO^{g} = c^{g^\top}\!\bx^g 
		\tag{5a} \label{bilevel_heat_elec2}  \\
		% & \quad 	&&  \quad   &&  \  \text{s.t. } \quad A^{g}  \bm{x^g}   +  B^{g}  \bm{x^e} \geq b^{g} \label{bilevel_heat_elec3}\\
		& \quad 	&&  \quad   &&  \  
		\text{s.t. } \quad \bx^g  \in \cF^g \tag{5b}
		\label{bilevel_heat_elec4} \\
		& \quad 	&&  \quad   &&  \  
		\phantom{s.t. } \quad  A^{g}  \bx^g   +  B^{g}  
		\bx^e = d^{g}. \tag{5c}\label{bilevel_heat_elec5} 
\end{alignat}
\end{subequations}
\setcounter{equation}{5}
The objective \eqref{bilevel_heat_elec0} of the middle-level problem is to minimize the electricity dispatch cost, constrained by linearized power flow constraints \eqref{bilevel_heat_elec1} and bounds on the selected bids $\bx^{uc}$ of electricity suppliers \eqref{bilevel_heat_elec1.2}.
The lower-level problem \eqref{bilevel_heat_elec2}--\eqref{bilevel_heat_elec5} represents the GM clearing, which seeks to minimize the natural gas dispatch cost \eqref{bilevel_heat_elec2}, constrained by linearized gas flow constraints and the bounds on natural gas supply and demand bids \eqref{bilevel_heat_elec4}, as well as the nodal gas balance equation \eqref{bilevel_heat_elec5}.

The detailed expressions of these optimization problems, as well as the matrices $A^{uc}$, $B^{uc}$, $A^{e}$, $B^{e}$, $A^{g}$, $B^{g}$, the vectors $c^{uc}$, $b^{uc}$, $c^{e}$, $b^{e}$, $c^{g}$, $d^{g}$, and the polytopes $\cF^{uc}$, $\cF^{e}$, $\cF^{g}$ is derived from \cite{byeon2019unit}.

\subsubsection*{\textbf{Public and Sensitive Information}}
The GAUC problem takes as input the \emph{public} techno-economic characteristics of electricity 
suppliers, which are represented in Problem \eqref{3level} by the matrices $A^{uc}$ and $B^{uc}$, the vectors $c^{uc}$ and $b^{uc}$, and the polytope $\cF^{uc}$.
As the GAUC and EM cover the same energy system, these two agents also take the same parameters as input, which include the price $c^e$ and quantity $b^e$ bids of electricity suppliers, electricity demand profile, and electricity network technical characteristics. 
These parameters are represented in Problem \eqref{bilevel_heat_elec} by the matrices $A^{e}$ and $B^e$, the vectors $c^e$ and $b^e$, and the polytope $\cF^{e}$. 

Moreover, the GAUC takes as input the parameters of the the GM 
follower, which include the \emph{public} price $c^g$ and quantity bids of gas suppliers, gas network technical characteristics, as well as \emph{sensitive} gas demand profiles, denoted $D^g_S$. These parameters are represented in Problem \eqref{bilevel_heat_elec} by the matrices $A^g$ and $B^g$, the vectors $c^g$ and $D^F_S$, and the polytope $\cF^{g}$. 
In particular, the gas demand profiles $D^g_S = \left[d_{j}^g:\forall j \in \mathcal{V} \right]$ at all nodes $j \in \mathcal{V}$ of the gas network, represent the sensitive information (referred to as $D^F_S$ in Problem \eqref{eq:hierarchical}) of the GM follower.

In contrast, the following outcomes of the GM clearing are traditionally considered \emph{publicly available} information: the original objective value $\cO^{g^*}$ and natural gas prices $\by^{g^*}$. The mechanism introduced next will leverage this public information to train the predictor $\cM^F$ (step [3a]) of the PPSM.

\end{document}